\theoremstyle{plain}
\newtheorem{theorem}{Theorem}
\newtheorem{lemma}[theorem]{Lemma}
\newtheorem{proposition}[theorem]{Proposition}
\theoremstyle{definition}
\newtheorem{definition}[theorem]{Definition}
\newtheorem{example}[theorem]{Example}
\newtheorem{remark}[theorem]{Remark}
\newtheorem{note}[theorem]{Note}
\def\piros#1{{\color{red}#1}}%
\def\kek#1{{\color{blue}#1}}%
\DeclareMathOperator{\im}{im}
\title[Combinatorial proof of an identity on Genocchi numbers]{Combinatorial proof of an identity on Genocchi numbers}
\author{Be\'ata B\'enyi}
\address{\noindent Faculty of Water Sciences, University of Public Service, Baja, HUNGARY}
\email{benyi.beata@uni-nke.hu}
\author{Matthieu Josuat-Verg\`{e}s}
\address{\noindent CNRS, IRIF (UMR 8243), Universit\'e de Paris, FRANCE}
\email{matthieu.josuat-verges@u-pem.fr}
\date{\today}
\subjclass[2010]{05A19, 11B68}
\keywords{Genocchi numbers, poly-Bernoulli numbers, combinatorial identity, bijections, Callan sequences}
\begin{document}
\begin{abstract}In this note we present a combinatorial proof of an identity involving poly-Bernoulli numbers and Genocchi numbers. 
We introduce the combinatorial objects, $m-$barred Callan sequences and show that the identity holds in a more general manner.
\end{abstract}


\maketitle

\section{Introduction}

In this paper, we present a combinatorial proof of an identity that establishes an interesting relation between the $C$-poly-Bernoulli numbers and Genocchi numbers.
The Genocchi numbers $G_n$ can be defined for instance by the generating function \cite[Exercise~5.8]{Stanley}:
\begin{align*}
  \sum_{n=0}^{\infty}G_n\frac{t^n}{n!}=\frac{2t}{e^t+1}.
\end{align*}
The first values are 
\[
  0,1,-1,0,1,0,-3,0,17,0,-155,\ldots
\] 
(see \href{http://oeis.org/A036968}{A036968} in \cite{OEIS}).

The {\it Poly-Bernoulli numbers} $B_n^{(k)}$ are defined for $k \in \mathbb{Z}$ by 
\begin{align}\label{pBdef}
\frac{\mbox{Li}_k(1-e^{-t})}{1-e^{-t}}=\sum_{n=0}^{\infty}B_n^{(k)}\frac{t^n}{n!},
\end{align}
where $\mbox{Li}_k(z)$ is the polylogarithm function given by
\[
  \mbox{Li}_k(z)=\sum_{m=1}^{\infty}\frac{z^m}{m^k}\quad (|z|<1).
\]
For  negative $k$ indices, the poly-Bernoulli numbers are integers (\href{http://oeis.org/A099594}{A099594} in~\cite{OEIS}). Several combinatorial objects are enumerated by these numbers, for instance lonesum matrices~\cite{Brew}, matrices uniquely reconstructible from their row and column sum vectors. For further objects see \cite{BH15, BH17}. It is proven in \cite{AK99} analytically and in \cite{BH15} combinatorially that 
\begin{align}\label{pB-id}
\sum_{j=0}^n (-1)^j B_{n-j}^{(-j)}=0.
\end{align}
Arakawa - Kaneko introduced a $C$-version of poly-Bernoulli numbers in \cite{AK99A} as
\begin{align}\label{pBcdef}
\frac{\mbox{Li}_k(1-e^{-t})}{e^{t}-1}=\sum_{n=0}^{\infty}C_n^{(k)}\frac{t^n}{n!}.
\end{align}
In particular, $C^{(1)}_n$ is the ordinary Bernoulli numbers $B_n$ defined by 
\[\frac{t}{e^t-1}=\sum_{n=0}^{\infty}B_n\frac{t^n}{n!}.\]
For negative $k$, these numbers $C_n^{(k)}$ are again positive integers.  They are the array \href{http://oeis.org/A136126}{A136126} in \cite{OEIS}, see also Table~\ref{C-numbers} below.  In \cite{BH17}, this integer array is introduced as the number of lonesum matrices without any all-zero columns.

Kaneko - Sakurai - Tsumura \cite{KST18} proved the $C$-version of the identity \eqref{pB-id} using analytical methods.
\begin{align}\label{pBc-id}
\sum_{j=0}^{n}(-1)^{j}C_{n-j}^{(-j-1)}=-G_{n+2}\quad (n\geq 0).
\end{align}

In this note we present a combinatorial proof of the identity \eqref{pBc-id}.

Recently, Matsusaka \cite{M20} showed that both \eqref{pB-id} and \eqref{pBc-id} are special cases of a more general polynomial identity between symmetrized poly-Bernoulli numbers and Gandhi-polynomials. 

\section{Definitions}

In this section we give the necessary definitions and notation. To prove the identity, we establish a connection between \emph{barred Callan sequences} and \emph{Dumont permutations of the first kind} defined in~\cite{Dumont}. 

In the entry \href{http://oeis.org/A099594}{A099594} of the OEIS~\cite{OEIS}, Callan defines a permutation class that are enumerated by the poly-Bernoulli numbers.  These are essentially the same as the Callan sequences that were defined in \cite{BN18}. Based on this notion, the authors introduced \emph{barred Callan sequences} in \cite{BM20}. Here we recall these definitions. 

Let $k,n\geq 0$ and consider the sets $K=\{\kek{1},\ldots, \kek{k}\}\cup\{\kek{*}\}$ (referred to as {\it blue elements}), and $N=\{\piros{1},\ldots, \piros{n}\}\cup\{\piros{*}\}$ (referred to as {\it red elements}).

\begin{definition}
 A {\it Callan sequence} of size $k\times n$ is a sequence
 \[
   (\kek{B_1},\piros{R_1})(\kek{B_2},\piros{R_2})\cdots(\kek{B_m},\piros{R_m}) (\kek{B^*},\piros{R^*}),
 \] 
 for some $m$ with $0 \leq m \leq n$, such that:
 \begin{itemize}
  \item $\{\kek{B_1},\dots,\kek{B_m},\kek{B^*}\}$ form a set partition of $K$ into $m+1$ non-empty blocks,
  \item $\{\piros{R_1},\dots,\piros{R_m},\piros{R^*}\}$ form a set partition of $N$ into $m+1$ non-empty blocks,
  \item $\kek{*}\in \kek{B^*}$, and $\piros{*}\in \piros{R^*}$.
 \end{itemize}
\end{definition}

Note that a Callan sequence can be encoded in an obvious way by the following data:
\begin{itemize}
 \item an integer $m$ with $0 \leq m \leq \min(k,n)$,
 \item an ordered partition $(\kek{B_1},\dots,\kek{B_m})$ of some subset $K'\subset \{\kek{1},\dots,\kek{k}\}$,
 \item an ordered partition $(\piros{R_1},\dots,\piros{R_m})$ of some subset $N'\subset \{\piros{1},\dots,\piros{n}\}$.
\end{itemize}
This is done by letting $K' = K\setminus \kek{B^*}$ and $N' = N \setminus \piros{R^*}$.
 
To deal with these objects, it will be convenient to use the following terminology:
\begin{itemize}
 \item $\kek{B^*}$ and $\piros{R^*}$ are called \emph{extra blocks}, while $\kek{B_i}$ and $\piros{R_i}$ are called \emph{ordinary blocks},
 \item each pair $(\kek{B_i},\piros{R_i})$ or $(\kek{B^*},\piros{R^*})$ is a {\it Callan pair}, moreover, the former are called {\it ordinary pairs} and the latter is called the {\it extra pair}.
\end{itemize}

A Callan sequence is thus a linear arrangement of Callan pairs, with the extra pair at the end.

\begin{example}The Callan sequences with $k=2$ and $n=2$ are:
	\begin{align*}
		&(\kek{12*},\piros{12*}), & &(\kek{12},\piros{12})(\kek{*},\piros{*}), & &(\kek{1},\piros{12})(\kek{2*},\piros{*}), & &(\kek{2},\piros{12})(\kek{1*},\piros{*}), & &(\kek{12},\piros{1})(\kek{*},\piros{2*}),\\
		  &(\kek{12},\piros{2})(\kek{*},\piros{1*}), & &(\kek{1},\piros{1})(\kek{2*},\piros{2*}), & &(\kek{2},\piros{1})(\kek{1*},\piros{2*}), & &(\kek{1},\piros{2})(\kek{2*},\piros{1*}), & &(\kek{2},\piros{2})(\kek{1*},\piros{1*}),\\
		    &(\kek{1},\piros{1})(\kek{2},\piros{2})(\kek{*},\piros{*}), & &(\kek{1},\piros{2})(\kek{2},\piros{1})(\kek{*},\piros{*}), & &(\kek{2},\piros{1})(\kek{1},\piros{2})(\kek{*},\piros{*}), & &(\kek{2},\piros{2})(\kek{1},\piros{1})(\kek{*},\piros{*}).
	\end{align*}
\end{example}

\begin{definition}
 \emph{A barred Callan sequence} of size $k \times n$ is a sequence obtained by inserting a bar (denoted $|$) into a Callan sequence of size $k\times n$, with the restriction that the bar cannot be at the end of the sequence.  We denote by $\mathcal{C}_n^k$ the set of barred Callan sequences with $k$ blue elements and $n$ red elements.
\end{definition}

\begin{example}
 The $7$ barred Callan sequences for $k=2$ and $n=1$ are
\begin{align*}
&|(\kek{12*},\piros{1*}); \quad
|(\kek{12},\piros{1})(\kek{*},\piros{*}); \quad |(\kek{2},\piros{1})(\kek{1}\kek{*},\piros{*});\quad
|(\kek{1},\piros{1})(\kek{2}\kek{*},\piros{*});\\
& (\kek{1},\piros{1})|(\kek{2}\kek{*},\piros{*}); \quad
 \quad (\kek{2},\piros{1})|(\kek{1}\kek{*},\piros{*});\quad(\kek{12},\piros{1})|(\kek{*},\piros{*}) .
\end{align*}
\end{example}

It is shown in \cite{BM20} that the number of barred Callan sequences of size $k\times n$ is the $C$-poly-Bernoulli number $C_n^{(-k-1)}$.  To avoid negative indices in what follows, we denote $C_n^{k} = C_n^{(-k-1)}$.  In particular, we have $C_n^k=|\mathcal{C}_n^k|$.  

Table~\ref{C-numbers} gives the first few values of the numbers $C_n^k$.  Note that the symmetry $C_n^k = C_k^n$ is clear from the combinatorial interpretation in terms of barred Callan sequences, as one can exchange red and blue blocks to get a $n\times k$ Callan sequence from a $k\times n$ Callan sequence.

\begin{center}
\begin{table}[ht]
\begin{tabular}{c|cccccc}
&0&1&2&3&4&5\\\hline
0&1&1&1&1&1&1\\
1&1&3&7&15&31&63\\
2&1&7&31&115&391&1267\\
3&1&15&115&675&3451&16275\\
4&1&31&391&3451&25231&164731\\
5&1&63&1267&16275&164731&1441923\\
\end{tabular}
\medskip
\caption{$C$-poly-Bernoulli numbers $C_n^k$.\label{C-numbers}}
\end{table}
\end{center}

\begin{remark}
We note here that Callan sequences are equivalent to the special class of partial permutations where elements greater than $n$ are excedances and the elements smaller than or equal to $n$ are deficiencies.  The number $C_n^k$ also counts alternative tableaux of rectangular shape $k\times n$, and other tableaux of rectangular shapes that are in bijection with alternative tableaux such as permutation tableaux or tree-like tableaux.  See~\cite{BH15,BH17} for details.
\end{remark}

The other number sequence that is involved in the identity we want to prove is the Genocchi numbers.  Genocchi numbers can be defined in different ways, here we focus on the combinatorial point of view. There are several combinatorial objects that are enumerated by the Genocchi numbers. The first known interpretation was Dumont permutations of the first kind. 

\begin{definition}[\cite{Dumont}]
 A \emph{Dumont permutation of the first kind} is a permutation $\pi\in S_{2n}$, such that each even entry begins a descent and each odd entry begins an ascent or ends the string, i.e., for every $i=1,2,\ldots, 2n$:
\begin{align*}
\pi(i) \text{ even }\quad &\Longrightarrow  \quad i<2n \text{ and } \pi(i)>\pi(i+1),\\
\pi(i) \text{ odd }\quad &\Longrightarrow \quad  i<2n \text{ and } \pi(i)<\pi(i+1),  \mbox{ or } i=2n.
\end{align*}
Let $\mathcal{D}_{2n}$ denote the set of Dumont permutations on $2n$ elements.
\end{definition}

\begin{example}
For $2n=4$ we have $\mathcal{D}_{4}=\{2143, 3421, 4213\}$. For $2n=6$ we have 
\begin{align*}
\mathcal{D}_{6}=\{642135, 634215,  621435
421365,342165,214365,564213,563421,562143,\\
216435,435621,215643,436215,364215, 421563,356421,421635.\}
\end{align*}
\end{example}

In general, we have the following:

\begin{theorem}[Dumont~\cite{Dumont}]
  The cardinality of $\mathcal{D}_{2n}$ is $(-1)^{n+1}G_{2n+2}$.
\end{theorem}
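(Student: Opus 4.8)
The plan is to prove the equivalent statement $|\mathcal{D}_{2n}| = Q_n(1)$, where $Q_n$ is the \emph{Gandhi polynomial} defined by $Q_1(x)=x^2$ and $Q_{n+1}(x)=x^2\bigl(Q_n(x+1)-Q_n(x)\bigr)$. It is a classical fact (Carlitz, Riordan), obtainable from the generating function $\tfrac{2t}{e^t+1}$ stated in the introduction, that $Q_n(1)=(-1)^{n+1}G_{2n+2}$; indeed one checks directly that $Q_1(1)=1$, $Q_2(1)=3$, $Q_3(1)=17$, matching $|G_4|,|G_6|,|G_8|$. So it suffices to realize the Gandhi polynomials combinatorially on Dumont permutations. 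To that end I would introduce a statistic $\mathrm{s}$ on $\mathcal{D}_{2n}$ (the number of excedances of $\pi$, shifted by one, is a natural candidate, since $21\in\mathcal{D}_2$ then carries weight $x^2$) and set $P_n(x)=\sum_{\pi\in\mathcal{D}_{2n}} x^{\mathrm{s}(\pi)}$, normalized so that $P_1(x)=x^2$. The goal is to prove $P_n=Q_n$ by induction, i.e. that $P$ obeys the Gandhi recurrence.

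First I would analyze how a Dumont permutation of $\{1,\dots,2n+2\}$ is built from one of $\{1,\dots,2n\}$ by inserting the two largest letters $2n+1$ (odd) and $2n+2$ (even). The Dumont constraints pin the possibilities down sharply. Since $2n+2$ is even it must begin a descent, so it can never end the word and can only be placed immediately after an odd letter or at the very front: placing it after an even letter $e$ would force $e>2n+2$, which is impossible. Since $2n+1$ is odd and the only letter exceeding it is $2n+2$, it must either sit immediately to the left of $2n+2$ or occupy the final position. These two regimes --- ``$2n+1$ glued to the left of $2n+2$'' versus ``$2n+1$ at the end, $2n+2$ free'' --- are exactly what should produce the two terms of the finite difference.

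The heart of the proof, and the step I expect to be the main obstacle, is to show that this insertion realizes the operator $x^2\bigl(\,\cdot\,|_{x\to x+1}-\,\cdot\,\bigr)$ at the level of generating polynomials. The factor $x^2$ should come from the two inserted letters; the shift $x\to x+1$ should record the extra admissible slot created by the new letters; and the subtracted copy $-Q_n(x)$ should account precisely for the forbidden configurations, equivalently for the $\sigma\in\mathcal{D}_{2n+2}$ whose two largest letters cannot simply be erased (deleting $2n+2$ may place an odd letter in front of a smaller one, breaking the ascent condition, and it is exactly these ``defects'' that the difference operator removes). Matching admissible slots with the monomials of $Q_n(x+1)-Q_n(x)$ --- in particular choosing $\mathrm{s}$ so that the slot count at a given permutation equals the relevant coefficient extraction --- is the delicate bookkeeping; once in place, induction gives $P_n=Q_n$ and evaluation at $x=1$ yields $|\mathcal{D}_{2n}|=Q_n(1)=(-1)^{n+1}G_{2n+2}$. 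As an alternative route avoiding the finite-difference bookkeeping, one could weight Dumont permutations by cycle structure and map them to Motzkin-type lattice paths, recovering the known Stieltjes continued fraction for the Genocchi numbers via Flajolet's theory, at the cost of setting up the path model.
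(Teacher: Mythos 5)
First, a point of reference: the paper does not prove this statement at all --- it is quoted from Dumont's 1974 paper and used as a black box, entering only through the bijection $\mathcal{C}_0^0(m)\simeq\mathcal{D}_{2m}$ in Proposition~\ref{Cnkm_identities}. So there is no in-paper argument to compare yours against, and any complete proof you supply is new relative to this paper. Your chosen route --- refine $|\mathcal{D}_{2n}|$ by a statistic, show the resulting polynomial satisfies the Gandhi recurrence $Q_{n+1}(x)=x^2\bigl(Q_n(x+1)-Q_n(x)\bigr)$, and invoke $Q_n(1)=(-1)^{n+1}G_{2n+2}$ --- is essentially Dumont's own, and your structural analysis of where the two new letters can sit is correct: the even letter $2n+2$ can only stand at the front or immediately after an odd letter, and the odd letter $2n+1$ must either be glued to the left of $2n+2$ or occupy the final position.

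However, the proposal has a genuine gap exactly where you say you expect one. The statistic $\mathrm{s}$ is never pinned down (``excedances shifted by one'' is offered only as a candidate), and the verification that the insertion procedure realizes the operator $x^2\bigl(\,\cdot\,|_{x\mapsto x+1}-\,\cdot\,\bigr)$ is deferred wholesale to ``delicate bookkeeping''. That verification is the entire content of the theorem: one must show that for each $\pi\in\mathcal{D}_{2n}$ with $\mathrm{s}(\pi)=j$ the admissible insertions produce exactly a family of permutations whose weights sum to $x^2\bigl((x+1)^j-x^j\bigr)$, which requires both identifying the slots and computing the new value of $\mathrm{s}$ for every resulting permutation; checking $Q_1(1)=1$, $Q_2(1)=3$, $Q_3(1)=17$ does not substitute for this. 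A second, smaller gap: the identity $Q_n(1)=(-1)^{n+1}G_{2n+2}$ is the Gandhi conjecture, proved by Carlitz and by Riordan and Stein; it does not follow ``directly'' from the generating function $2t/(e^t+1)$, so if you decline to cite it you owe a proof of that as well --- at which point one may as well cite Dumont for the whole statement, as the paper does.
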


\section{Main result}

Using our notation, Identity~\eqref{pBc-id} can be reformulated as follows:

\begin{theorem}[\cite{KST18}] \label{theo_identity}
  For $n\geq 0$, it holds that:
  \begin{align}\label{identity0}
  \sum_{j=0}^n (-1)^jC_{n-j}^j= -G_{n+2}.
  \end{align}
\end{theorem}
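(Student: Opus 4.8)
The plan is to read the left-hand side of \eqref{identity0} as a signed enumeration and to evaluate it by a sign-reversing involution whose surviving configurations are counted by Dumont permutations. Since $C_{n-j}^{j}=|\mathcal{C}_{n-j}^{j}|$ counts barred Callan sequences with $j$ blue and $n-j$ red (non-star) elements, the sum $\sum_{j=0}^{n}(-1)^{j}C_{n-j}^{j}$ is exactly the signed cardinality of the disjoint union $\mathcal{X}_n=\bigsqcup_{j=0}^{n}\mathcal{C}_{n-j}^{j}$, in which a sequence with $j$ blue elements carries the sign $(-1)^{j}$. I would therefore aim to produce a sign-reversing involution $\phi$ on $\mathcal{X}_n$ together with a bijection between its fixed-point set and the Dumont permutations, so that $-G_{n+2}$ emerges through Dumont's theorem $|\mathcal{D}_{2m}|=(-1)^{m+1}G_{2m+2}$.

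First I would dispose of the case $n$ odd, which needs no new idea. The operation $\sigma$ that exchanges the blue and red block in every Callan pair, keeping the order of the pairs and the position of the bar unchanged, sends $\mathcal{C}_{n-j}^{j}$ to $\mathcal{C}_{j}^{n-j}$ and is manifestly an involution; this is just the symmetry $C_n^k=C_k^n$ made explicit on objects. A sequence with $j$ blue elements is sent to one with $n-j$ blue elements, so $\sigma$ multiplies the sign by $(-1)^{n-2j}=(-1)^{n}$. For odd $n$ this is sign-reversing, and a fixed point would force $j=n-j$, which is impossible. Hence $\sigma$ is a fixed-point-free sign-reversing involution, giving $\sum_{j=0}^{n}(-1)^{j}C_{n-j}^{j}=0=-G_{n+2}$, the last equality because $G_{n+2}=0$ for $n$ odd.

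The substance of the argument is the case $n=2m$, where $\sigma$ is sign-preserving and thus useless. Here I would construct a second involution $\phi$ that, whenever the blue and red sides are not balanced, transfers a canonically chosen element from the larger side to the smaller one, changing $j$ by one and hence reversing the sign; and I would arrange the fixed points of $\phi$ to be balanced barred Callan sequences, all lying in $\mathcal{C}_{m}^{m}$ and therefore all carrying the same sign $(-1)^{m}$. Granting such a $\phi$ whose fixed points are in bijection with $\mathcal{D}_{2m}$, the signed count collapses to $(-1)^{m}|\mathcal{D}_{2m}|$, and Dumont's theorem yields $(-1)^{m}(-1)^{m+1}G_{2m+2}=-G_{2m+2}=-G_{n+2}$, as required.

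The hard part is exactly the construction of $\phi$ and of the bijection to $\mathcal{D}_{2m}$. The difficulty is twofold: the families $\mathcal{C}_{n-j}^{j}$ for different $j$ live on different ground sets, so moving an element across sides requires a canonical rule for deleting the transferred blue (resp. red) element and reinserting it as a red (resp. blue) element; this rule must be a genuine involution and it must fail to apply precisely on a distinguished balanced subfamily. Identifying that subfamily and matching it with Dumont's descent/ascent pattern, where each even value starts a descent and each odd value starts an ascent or ends the word, is the crux. I expect the cleanest route to be the one hinted at in the abstract, namely to pass to \emph{$m$-barred Callan sequences} and prove a more general identity in which both the involution and the Dumont correspondence become transparent, specializing it at the end to \eqref{identity0}.
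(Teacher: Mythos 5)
Your reduction of the odd case via the blue--red swap is correct and is exactly the observation the paper makes after Theorem~\ref{theo_identity2}. But for even $n$, which you rightly identify as the entire content of the statement, you do not actually construct anything: the sign-reversing involution $\phi$ and the bijection between its fixed points and $\mathcal{D}_{2m}$ are only postulated (``Granting such a $\phi$\dots''), so as written this is a plan rather than a proof. The gap is not a formality. The specific object you hope for --- a single involution on $\bigsqcup_{j}\mathcal{C}_{n-j}^{j}$ whose fixed points are ``balanced'' sequences in $\mathcal{C}_{m}^{m}$ matched with Dumont permutations --- is precisely what the paper's closing remark declares to be an open problem (``It would be also interesting to find a direct bijection \dots without any iteration steps needed''). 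So you cannot lean on the expectation that such a direct involution exists; the authors could not produce one.

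What the paper does instead is structurally different from your sketch, even though it shares the signed-enumeration philosophy. It introduces the interpolating sets $\mathcal{C}_{n}^{k}(m)$ of $m$-barred Callan sequences and proves the one-step recursion $\sum_{j=0}^{n}(-1)^{j}C_{n-j}^{j}(m)=-\sum_{j=0}^{n-2}(-1)^{j}C_{n-2-j}^{j}(m+1)$, not by a single involution with a fixed-point set, but by a pair of bijections: $\phi$ cancels $\mathcal{C}_{n-j}^{j}(m,\piros{R*})$ against most of $\mathcal{C}_{n-j-1}^{j+1}(m,\piros{*})$, leaving the exceptional subsets $\mathcal{C}_{n-j}^{j}(m,\piros{*},|\kek{m'})$, and $\psi$ then re-encodes those survivors as elements of $\mathcal{C}_{n-j-1}^{j-1}(m+1)$, which produces the global minus sign and the shift $m\mapsto m+1$. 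Iterating $n/2$ times terminates at $\mathcal{C}_{0}^{0}(n/2)$, whose elements are pure arrangements of labelled bars in bijection with $\mathcal{D}_{n}$ --- not balanced Callan sequences in $\mathcal{C}_{m}^{m}$ as you predicted. To complete your argument you would need either to supply the direct involution (solving the paper's open problem) or to adopt the iterative architecture, in which case the real work is the case analysis defining $\phi$ (cases A1, A2, B1, B2 on the position of the maximal red element) and the two-stage map $\psi=\psi_r\circ\psi_b$, none of which appears in your proposal.
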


We introduce some combinatorial objects, that will be at the core of our combinatorial proof of a generalization of the identity above.  They can be seen as barred Callan sequences with extra structure.  We could say, that they interpolate in a certain sense between barred Callan sequences and Dumont permutations.  

\begin{definition}
  Let $\mathcal{C}_n^k(m)$ denote the set of sequences defined by the following conditions.  First, the elements in the sequence are:
  \begin{itemize}
    \item $m$ blue bars labelled with $1,\dots,m$, and $m+1$ red bars labelled with $0,\dots,m$,
    \item the Callan pairs of a $k \times n$ Callan sequence, shifted up by $m$ (i.e., with base sets $K = \{\kek{m+1}, ..., \kek{m+k}\} \cup \{\kek{*}\}$ and $N = \{\piros{m+1}, ..., \piros{m+n}\} \cup \{\piros{*}\}$).
    \end{itemize}
    Second, these elements (labelled bars and Callan pairs) are ordered in a way such that:
    \begin{itemize}
    \item A blue bar with label $i$ is followed by a bar with label strictly smaller than $i$.
    \item A red bar with label $i$ is followed either by a Callan pair, or by a bar with label strictly greater than $i$.
  \end{itemize}
  The elements of $\mathcal{C}_n^k(m)$ are called $m$-{\it barred Callan sequences}, and the cardinality of $\mathcal{C}_n^k(m)$ is denoted $C_n^k(m)$.
\end{definition}

\begin{remark}
  The $m$-barred Callan sequences are defined otherwise in \cite{BM20}, we call our special objects $m$-barred Callan sequence for the sake of convenience.  
\end{remark}

\begin{remark}
  The numbers $C_n^k(1)$ are the so-called {\it poly-Bernoulli} $D$-{\it relatives}, also introduced in \cite{BH17} as the number of lonesum matrices that do not contain any all-zero column and any all-zero row. 
\end{remark}

Note that a bar is always followed by another element in the sequence, so the last element of the sequence is a Callan pair (more precisely, it is necessarily the extra pair of the underlying Callan sequence).  For example, an element of $\mathcal{C}_4^6(3)$ is:
\[
   \kek{|_3|_2}\piros{|_1|_2}               (\kek{5},  \piros{45}) 
               \piros{|_3}                  (\kek{79}, \piros{7})  
                                            (\kek{4},  \piros{6})
   \kek{|_1}   \piros{|_0}                  (\kek{68*},\piros{*}).
\]

\begin{note} \label{remarks_groups}
Sometimes it is convenient to think that each Callan pair is associated to the (possibly empty) sequence of consecutive bars preceding it.  In the previous example, we have the four ``groups'':
\[
   \kek{|_3|_2}\piros{|_1|_2}               (\kek{5},  \piros{45})
   \quad 
               \piros{|_3}                  (\kek{79}, \piros{7})
   \quad
                                            (\kek{4},  \piros{6})
   \quad
   \kek{|_1}   \piros{|_0}                  (\kek{68*},\piros{*}).
\]
Moreover, the last bar of each group is a red bar.
\end{note}

Let us first record some properties that are simple consequences of the definition.

\begin{proposition} \label{Cnkm_identities}
We have the following identities:
\begin{align}
  C_n^k(m) &= C_k^n(m),  \\
  C_n^k(0) &= C_n^k,     \\
  C_0^0(m) &= (-1)^{m+1} G_{2m+2}.
\end{align}
\end{proposition}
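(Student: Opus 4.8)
The plan is to prove the three identities separately, in increasing order of difficulty, using only the definitions and the cardinality formula of Dumont quoted above. For the middle identity $C_n^k(0)=C_n^k$ I would simply unwind the definition. When $m=0$ there are no blue bars and a single red bar labelled $0$; by the red-bar rule this bar must be followed by a Callan pair, hence it sits immediately before one of the Callan pairs of the underlying (now unshifted) $k\times n$ Callan sequence, i.e.\ anywhere except at the very end. This is precisely the datum of a barred Callan sequence, so forgetting the label $0$ gives a bijection $\mathcal{C}_n^k(0)\to\mathcal{C}_n^k$.

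For the symmetry $C_n^k(m)=C_k^n(m)$ the key observation is that the dependence on $n$ and $k$ lives entirely in the Callan pairs, whereas the bars depend only on $m$. I would apply the standard red--blue exchange of blocks (the same one yielding $C_n^k=C_k^n$) to every Callan pair, keeping the reshift by $m$, and leave each bar---its colour, its label, and its position in the left-to-right order---untouched. The only thing to verify is that the ordering conditions survive, and this is immediate: they refer solely to the colours and labels of bars and to whether a neighbour is a bar or a Callan pair, never to the contents of a Callan pair, while consecutive bars always belong to a single group. Hence the exchange is a bijection $\mathcal{C}_n^k(m)\to\mathcal{C}_k^n(m)$.

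The substantial case, and the one I expect to be the main obstacle, is $C_0^0(m)=(-1)^{m+1}G_{2m+2}$. Since Dumont's theorem gives $|\mathcal{D}_{2m}|=(-1)^{m+1}G_{2m+2}$, it suffices to construct a bijection $\mathcal{C}_0^0(m)\to\mathcal{D}_{2m}$. When $k=n=0$ an element of $\mathcal{C}_0^0(m)$ consists of all $2m+1$ bars in a single group followed by the lone extra pair $(\kek{*},\piros{*})$. First I would note that the red bar labelled $m$ is forced to be the last bar: it must be followed by a Callan pair or by a strictly larger label, and no larger label exists. Deleting it leaves $2m$ bars, the blue ones $1,\dots,m$ and the red ones $0,\dots,m-1$, and I would encode blue bar $i$ by the even value $2i$ and red bar $i$ by the odd value $2i+1$, obtaining a permutation of $\{1,\dots,2m\}$. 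The heart of the argument is to check that this colour--parity dictionary turns the bar rules into the Dumont rules: ``blue bar followed by a smaller label'' becomes ``an even entry begins a descent'', ``red bar followed by a larger label'' becomes ``an odd entry begins an ascent'', and the bar immediately preceding the deleted red $m$ is itself red, supplying the odd value required at the final position.

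I expect the delicate point to be verifying that the encoding is faithful, namely that every label comparison occurring between two adjacent bars translates into the correct comparison of their values; this uses the fact that blue bar $i$ and red bar $i$ can never be adjacent, so the tie between their nearly equal values never has to be resolved across an adjacency. One must also confirm that the inverse assignment---read off parities, then reattach red $m$ and the extra pair---always lands back in $\mathcal{C}_0^0(m)$, which follows from the same dictionary together with the observation that the last entry of a Dumont permutation is odd. Once this bijection is in place, $C_0^0(m)=|\mathcal{D}_{2m}|=(-1)^{m+1}G_{2m+2}$ by Dumont's theorem, completing the proof.
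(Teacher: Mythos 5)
Your proposal is correct and follows essentially the same route as the paper: the red--blue block exchange for the symmetry, identifying the unique bar $\piros{|_0}$ with the unlabelled bar for $m=0$, and for $C_0^0(m)$ the forced position of $\piros{|_m}$ followed by the parity encoding $\kek{|_i}\mapsto 2i$, $\piros{|_i}\mapsto 2i+1$ onto Dumont permutations. Your verification of the label-to-parity dictionary is merely a more detailed write-up of the bijection the paper states without elaboration.
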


\begin{proof}
There is a simple bijection between $\mathcal{C}_n^k(m)$ and $\mathcal{C}_k^n(m)$: it consists in exchanging blue blocks with red blocks in each Callan pair.  As a consequence, we get the first identity.

The elements in $\mathcal{C}_k^n(0)$ contain a unique bar, $\piros{|_0}$. We thus have a simple bijection between $\mathcal{C}_k^n(0)$ and $\mathcal{C}_k^n$ by changing $\piros{|_0}$ into $|$.  Hence, we get the second identity.

The elements in $\mathcal{C}_0^0(m)$ contain a unique Callan pair, namely $(\kek{*},\piros{*})$, in last position.  This Callan pair is preceded by $\piros{|_m}$, as there is no label strictly greater than $m$.  Now, there is a simple bijection between $\mathcal{C}_0^0(m)$ and $\mathcal{D}_{2m}$: remove $\piros{|_m} (\kek{*},\piros{*})$, transform $\kek{|_i}$ into $2i$ and $\piros{|_i}$ into $2i+1$.
\end{proof}

Our main result is that the numbers $C_n^k(m)$ also have an alternating sum equal to Genocchi numbers:

\begin{theorem}\label{theo_identity2}
  For $n,m\geq 0$, we have:
  \begin{align}\label{identity}
  \sum_{j=0}^{n} (-1)^j C_{n-j}^j(m) = (-1)^{m+1} G_{n+2m+2}.
  \end{align}
\end{theorem}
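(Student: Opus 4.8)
The plan is to prove the identity $\sum_{j=0}^n (-1)^j C_{n-j}^j(m) = (-1)^{m+1}G_{n+2m+2}$ by a sign-reversing involution on the disjoint union $\bigcup_{j=0}^n \mathcal{C}_{n-j}^j(m)$, where an object in $\mathcal{C}_{n-j}^j(m)$ is assigned the sign $(-1)^j$. The sum then counts, with sign, all $m$-barred Callan sequences in which the total number of red and blue \emph{non-star} elements is $n$ (since $(n-j)$ red plus $j$ blue ordinary labels always sum to $n$), and the goal is to show that almost everything cancels, leaving a signed count equal to $(-1)^{m+1}G_{n+2m+2}$. Because Proposition~\ref{Cnkm_identities} already gives $C_0^0(m)=(-1)^{m+1}G_{2m+2}$ via a clean bijection with $\mathcal{D}_{2m}$, the natural target is a set of surviving (fixed) objects that can be identified with $\mathcal{D}_{n+2m+2}$, up to the known sign.

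\medskip

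First I would set up the involution. The idea is to locate a distinguished ordinary label---say the smallest numerical label among all non-star colored elements, or the label sitting in a canonically determined Callan pair---and toggle its color between blue and red, which changes $j$ by $\pm 1$ and hence flips the sign. The delicate point is that moving an element between the blue and red sides of its Callan pair may violate the ordering axioms of Definition (the bar-label monotonicity conditions) or may empty a block; so the toggle must be restricted to a position where legality is preserved in both directions. I would therefore scan the groups of Note~\ref{remarks_groups} from one end, find the first group whose structure admits a reversible color-swap (for instance, a group whose leading bar configuration and block sizes allow an element to migrate), and perform the swap there. Objects on which no such group exists are the fixed points.

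\medskip

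The main obstacle---and the heart of the argument---will be pinning down exactly which objects are fixed by the involution and showing they biject with Dumont permutations $\mathcal{D}_{n+2m+2}$. Intuitively, a fixed point should be forced to have a very rigid structure: every non-star element is locked into a position where swapping its color would break monotonicity or empty a block, which should coerce the Callan pairs into singleton blocks interleaved rigidly with the bars, so that the whole sequence encodes an arrangement of $n+2m$ ``bar-like'' units together with the final $(\kek{*},\piros{*})$ pair. Under the encoding of Proposition~\ref{Cnkm_identities} (bars $\kek{|_i}\mapsto 2i$, $\piros{|_i}\mapsto 2i+1$, extended to absorb the now-singleton ordinary pairs into the even/odd descent-ascent pattern), these rigid objects should read off exactly the Dumont permutations of size $n+2m+2$. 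The sign bookkeeping is then automatic: the two identities of Proposition~\ref{Cnkm_identities} together with Dumont's theorem give $|\mathcal{D}_{n+2m+2}| = (-1)^{n+m+1}G_{n+2m+2}$, but since the surviving objects all carry a common sign determined by the parity of $n$ in the fixed configuration, the signed count collapses to $(-1)^{m+1}G_{n+2m+2}$.

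\medskip

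Concretely I would carry out the steps in this order: (i) define the sign and the signed set $\bigcup_j \mathcal{C}_{n-j}^j(m)$; (ii) define the color-toggle involution via a deterministic rule for selecting the active group/element and verify it respects the ordering axioms and is its own inverse where defined; (iii) check it is genuinely sign-reversing (the selected swap always changes $j$ by one); (iv) characterize the fixed points as the rigid configurations described above; (v) construct an explicit bijection from the fixed points to $\mathcal{D}_{n+2m+2}$ generalizing the $\mathcal{C}_0^0(m)\to\mathcal{D}_{2m}$ map; and (vi) combine with Dumont's theorem to read off the sign $(-1)^{m+1}$. I expect step (ii)---making the toggle well-defined and provably legal in both directions simultaneously, especially around block-emptying and the red-bar/Callan-pair alternation---to be the technically hardest part, with step (iv) a close second.
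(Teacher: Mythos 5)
Your overall strategy (a signed union $\bigcup_j\mathcal{C}_{n-j}^j(m)$, a sign-reversing cancellation, fixed points in bijection with Dumont permutations) is the right family of ideas, but there is a genuine gap: the entire technical content of the proof is deferred. The involution is never defined --- ``scan the groups and find the first one whose structure admits a reversible color-swap'' is a placeholder, not a rule --- and the difficulty you flag yourself (recoloring an element forces a relabelling of the red and blue ground sets, can empty a block, and can break the bar-ordering axioms) is precisely where all the work lies. More seriously, your expectation that the fixed points of a \emph{single} round of cancellation are already rigid, Dumont-like configurations is not borne out. In the paper, one round of cancellation (the map $\phi$, which removes the maximal red element and creates a new maximal blue one --- essentially your color toggle, made precise through a four-case analysis) leaves as survivors the sets $\mathcal{C}_{n-j}^j(m,\piros{*},|\kek{m'})$, and these are still full-fledged barred Callan sequences: a second bijection $\psi$ identifies them with $\mathcal{C}_{n-2-j}^j(m+1)$, i.e.\ with a smaller instance of the same alternating sum (Proposition~\ref{prop_rec}). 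The theorem then follows by iterating this reduction $n/2$ times until only $\mathcal{C}_0^0(m+n/2)\cong\mathcal{D}_{n+2m}$ remains. The authors explicitly remark that a direct, one-shot involution of the kind you propose is left as an open problem, so your step (iv) is not merely ``technically hard'': it is exactly the part that is not known how to do without iteration.

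There is also an indexing error in your endgame. By Dumont's theorem $|\mathcal{D}_{2N}|=(-1)^{N+1}G_{2N+2}$, so to land on $G_{n+2m+2}$ the surviving objects must biject with $\mathcal{D}_{n+2m}$, not $\mathcal{D}_{n+2m+2}$ (the latter has cardinality $(-1)^{n/2+m}G_{n+2m+4}$), and your claimed identity $|\mathcal{D}_{n+2m+2}|=(-1)^{n+m+1}G_{n+2m+2}$ is false. The correct bookkeeping, as in the paper, is $\sum_{j}(-1)^jC_{n-j}^j(m)=(-1)^{n/2}C_0^0(m+n/2)=(-1)^{n/2}\cdot(-1)^{n/2+m+1}G_{n+2m+2}=(-1)^{m+1}G_{n+2m+2}$ for $n$ even, the odd case being trivial by the symmetry $C_n^k(m)=C_k^n(m)$.
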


Note that when $n$ is odd, both sides of~\eqref{identity} are $0$ (the sum has an even number of terms, and they can be paired using the symmetry $C_n^k(m) = C_k^n(m)$).  So only the case where $n$ is even is of interest.  Remark that the case $m=0$ in the previous theorem is precisely Theorem~\ref{theo_identity}.

We will use the following identity:
 
\begin{proposition} \label{prop_rec}
  For $n\geq 2$ and $m\geq 0$, we have:
  \begin{align}\label{identity2}
    \sum_{j=0}^{n} (-1)^j C_{n-j}^j(m) 
    = 
    - \sum_{j=0}^{n-2} (-1)^j C_{n-2-j}^j(m+1) 
  \end{align}
\end{proposition}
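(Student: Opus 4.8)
The plan is to prove the equivalent statement
\[
\sum_{j=0}^{n}(-1)^jC_{n-j}^j(m)+\sum_{j=0}^{n-2}(-1)^jC_{n-2-j}^j(m+1)=0
\]
by exhibiting a fixed-point-free, weight-reversing involution on the disjoint union
\[
\mathcal{Y}=\Big(\bigsqcup_{j=0}^{n}\mathcal{C}_{n-j}^{j}(m)\Big)\ \sqcup\ \Big(\bigsqcup_{j=0}^{n-2}\mathcal{C}_{n-2-j}^{j}(m+1)\Big),
\]
where each object is weighted by $(-1)^{b}$, with $b$ its number of ordinary (non-star) blue elements. The left block of $\mathcal{Y}$ consists of all $m$-barred Callan sequences with exactly $n$ ordinary colored elements, the right block of all $(m+1)$-barred Callan sequences with exactly $n-2$ of them, so the total weight of $\mathcal{Y}$ is precisely the quantity above and a fixed-point-free weight-reversing involution finishes the proof.

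The involution $\Phi$ will combine two moves, each of which changes $b$ by one and hence reverses the weight. The first is a within-level \emph{recoloring}: at a canonically chosen position a single ordinary element is moved between a blue role and a red role (with the base sets relabelled by the forced shift), sending $\mathcal{C}_{n-j}^{j}(m)$ to $\mathcal{C}_{n-j'}^{j'}(m)$ with $j'=j\pm1$, and never leaving its block. The second is a \emph{cross-level} move: using the group structure of Note~\ref{remarks_groups} and the fact that the maximal red bar must sit immediately before a Callan pair, I would \emph{promote} a distinguished blue--red pair of ordinary elements at level $m$ into the two top-labelled bars $\kek{|_{m+1}}$ and $\piros{|_{m+1}}$ at level $m+1$ (deleting the pair, inserting the bars in their forced positions, and reshifting), together with its inverse \emph{demotion}. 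Promotion deletes one ordinary blue and one ordinary red element and adds one blue and one red bar, so it carries $\mathcal{C}_{n-j}^{j}(m)$ to $\mathcal{C}_{n-2-(j-1)}^{\,j-1}(m+1)$, matching the index shift between the two sums while reversing the weight.

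I would then define $\Phi$ by a priority rule: attempt the recoloring at the canonical position; where it is \emph{blocked}, apply promotion on the left block and demotion on the right block instead. The key structural claim is that the blocked configurations at level $m$ are exactly those on which promotion is defined, and that demotion matches them bijectively with the blocked configurations at level $m+1$, while everything else is paired within its own block by recoloring. One can sanity-check this on the smallest instance ($n=2$, $m=0$): the six objects, three of weight $+1$ and three of weight $-1$, split into three opposite-weight pairs, with the single level-$1$ object $\kek{|_1}\piros{|_0}\piros{|_1}(\kek{*},\piros{*})$ paired by demotion to a level-$0$ object carrying one blue and one red ordinary element, consistent with $C_0^0(1)=(-1)^{1+1}G_4$ from Proposition~\ref{Cnkm_identities}.

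The main difficulty lies entirely in making $\Phi$ well defined and genuinely involutive. This demands a canonical choice of \emph{which} element to recolor and of \emph{which} blue--red pair to promote, most naturally via the largest available label read through the encoding $\kek{|_i}\mapsto 2i$, $\piros{|_i}\mapsto 2i+1$ that already underlies the Dumont bijection of Proposition~\ref{Cnkm_identities}, chosen so that recoloring is an involution on the unblocked part and promotion and demotion are mutually inverse. The delicate points are the boundary cases forced by the ordering constraints: a blue bar must be followed by a strictly smaller label, a red bar by a Callan pair or a strictly larger label, the extra pair must remain last, and all blocks must stay non-empty. Verifying that each move preserves these constraints, that the blocked locus is characterised cleanly and matched by demotion, and that $\Phi$ has no fixed point, is where the real work lies; the weight and index bookkeeping, and the subsequent derivation of Theorem~\ref{theo_identity2} by iterating the recurrence down to $C_0^0(m+p)=(-1)^{m+p+1}G_{2m+2p+2}$, are then routine.
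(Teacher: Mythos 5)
Your strategy is in the right spirit: the paper's proof is indeed a sign-reversing cancellation built from exactly the two moves you describe, namely a within-level ``recoloring'' (the map $\phi$, which deletes the maximal red element and creates a new maximal blue element, pairing $\mathcal{C}_{n-j}^{j}(m,\piros{R*})$ with most of $\mathcal{C}_{n-j-1}^{j+1}(m,\piros{*})$) and a cross-level ``promotion'' (the map $\psi=\psi_r\circ\psi_b$, which converts the minimal blue and red ordinary elements into the bars $\kek{|_{m+1}}$ and $\piros{|_{m+1}}$). But as written this is a plan rather than a proof: neither move is actually constructed, and the entire content of the argument --- the four cases A1, A2, B1, B2 of $\phi$, the characterization of its complementary image $\mathcal{C}_{n-1}^{k+1}(m,\piros{*},\kek{|m'})$, the two-step definition of $\psi$, and the verification that the bar-ordering constraints survive every move --- is precisely what you defer as ``the real work''. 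The boundary checks ($\mathcal{C}_0^n(m,\piros{R*})=\varnothing$, $\mathcal{C}_n^0(m,\piros{*})=\varnothing$) that make the telescoping legitimate are also absent.

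Beyond the missing constructions, the one precise structural claim you do make is not tenable. You assert that recoloring pairs the unblocked objects \emph{within each of the two blocks} and that promotion matches the blocked locus at level $m$ bijectively with the blocked locus at level $m+1$. In the actual proof the blocked locus at level $m$ is matched with \emph{all} of level $m+1$, and no recoloring occurs in the right block. Concretely, take $n=4$, $m=0$: with the natural recoloring (the paper's $\phi$) the blocked sets $\mathcal{C}_{4-j}^{j}(0,\piros{*},|\kek{m'})$ have sizes $0,1,5,1,0$, totalling $7$, which equals $C_2^0(1)+C_1^1(1)+C_0^2(1)=1+5+1=7$, i.e.\ the \emph{whole} right block; whereas the blocked subset of the right block has only $0+3+0=3$ elements (via $\mathcal{C}_1^1(1,\piros{*},|\kek{m'})\cong\mathcal{C}_0^0(2)$, of cardinality $-G_6=3$). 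Since $7\neq 3$, no bijection of the kind you postulate exists, and any correct completion along these lines must treat the two blocks asymmetrically, exactly as the paper does.
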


The combinatorial proof of Proposition~\ref{prop_rec} will be done in the next two sections.  To finish the present section, let us prove that Theorem~\ref{theo_identity2}, and therefore Theorem~\ref{theo_identity}, are consequences of Proposition~\ref{prop_rec}.

\begin{proof}[Proof of Theorem~\ref{theo_identity2}]
As noted above, we can assume $n$ is even.  Denote $n=2n'$.  We can iterate the identity in Proposition~\ref{prop_rec} successively with $(n,m)$, then $(n-2,m+1)$, etc.  Each iteration gives a sum with one less term, and we eventually get:
\[
  \sum_{j=0}^{n} (-1)^j C_{n-j}^j(m) 
  =
  (-1)^{n'} C_0^0( m + n' ).
\]
From the third equation in Proposition~\ref{Cnkm_identities}, this is also $(-1)^{n'+n'+m+1} G_{ 2m+2n'+2 }$ which simplifies to $(-1)^{m+1} G_{n+2m+2}$.
\end{proof}

\section{\texorpdfstring{The bijection $\phi$}{The bijection φ}}

Here we use the map introduced in \cite{BH15} in order to prove~\eqref{pB-id}.  We slightly modify this map, to reformulate it on the set of $m$-barred Callan sequences.  This map is a bijection between two particular subsets of $m$-barred Callan sequences.

\begin{definition}
Let $\mathcal{C}_n^k(m,\piros{*})$ denote the subset of $\mathcal{C}_n^k(m)$ containing elements $\alpha$ such that the extra red block is empty (by convention, this means it contains only $\piros{*}$).  Let $\mathcal{C}_n^k(m,\piros{R*}) = \mathcal{C}_n^k(m) \backslash \mathcal{C}_n^k(m,\piros{*})$ denote the complementary subset.

Following the previous convention, let $C_n^k(m,\piros{*})$ (respectively, $C_n^k(m,\piros{R*})$) denote the cardinality of $\mathcal{C}_n^k(m,\piros{*})$ (respectively, $\mathcal{C}_n^k(m,\piros{R*})$).
\end{definition}

Suppose that $n\geq 1$ and $k\geq0$ are fixed throughout this section. We define the map 

\[
  \phi: \mathcal{C}_n^k(m,\piros{R*})\rightarrow \mathcal{C}_{n-1}^{k+1}(m,\piros{*})
\] 

as follows.  Let $\alpha$ be an $m$-barred Callan sequence with a non-empty extra red block, $\alpha\in \mathcal{C}_n^k(m,\piros{R*})$.  The general idea is to remove the maximal red element $\piros{m+n}$ and add a new blue element $\kek{m'} = \kek{m+k+1}$.  Distinguish four cases according to the location of the maximal red element $\piros{m+n}$:
\begin{itemize}
  \item[A1)] $\piros{m+n}$ is in the extra block as a singleton, $\piros{R^*}=\{\piros{m+n,*}\}$. 
  \item[A2)] $\piros{m+n}$ is in the extra block with other red elements, $\piros{R^*}\setminus \{\piros{m+n}\}=\piros{R'}\not=\{\piros{*}\}$. 
  \item[B1)] $\piros{m+n}$ is in an ordinary block as a singleton, i.e., there is an ordinary pair $(\kek{B_i},\piros{R_i})$ with $\piros{R_i} = \{\piros{m+n}\}$. 
  \item[B2)] $\piros{m+n}$ is in an ordinary block with other red elements, i.e., there is an ordinary pair $(\kek{B_i}, \piros{R_i})$ with $\piros{m+n}\in\piros{R_i}$ and $\piros{R_i}\setminus \{\piros{m+n}\}=\piros{R'}\neq\varnothing$.  
\end{itemize}
Accordingly, $\phi(\alpha)$ is defined as the result of the following procedure:
\begin{itemize}
  \item[A1)] Delete $\piros{m+n}$ and add the new element $\kek{m'}$ to the first Callan pair. 
  \item[A2)] Replace the extra red block $\piros{R^*}$ with $\{\piros{*}\}$, and place $(\{\kek{m'}\},\piros{R'})$ in front of the Callan sequence as the first element. 
  \item[B1)] The successive steps are as follows:
    \begin{itemize}
      \item replace the extra red block $\piros{R^*}$ with $\{\piros{*}\}$,
      \item replace $\piros{R_i} = \{\piros{m+n}\}$ with $\piros{R^*}\backslash\{\piros{*}\}$,
      \item move the new pair $(\kek{B_i},\piros{R^*}\backslash\{\piros{*}\})$ in front of the Callan sequence, together with the group of bars to its left (as in Note~\ref{remarks_groups}),
      \item add $\kek{m'}$ to the $(i+1)$st blue block (this means either $\kek{B_{i+1}}$, or the extra blue block $\kek{B^*}$ if $\kek{B_i}$ was in the last ordinary pair). 
    \end{itemize}

  \item[B2)] The several steps are as follows:
    \begin{itemize}
      \item replace the extra red block $\piros{R^*}$ with $\{\piros{*}\}$,
      \item replace $(\kek{B_i},\piros{R_i})$ with two pairs $(\kek{B_i},\piros{R^*}\backslash\{\piros{*}\})$ and $(\{\kek{m'}\},\piros{R'})$,
      \item move the new pair $(\kek{B_i},\piros{R^*}\backslash\{\piros{*}\})$ in front of the sequence, together with the group of consecutive bars preceding it (as in Remark~\ref{remarks_groups}). 
    \end{itemize}
\end{itemize}

\begin{example}
Suppose that $m, n, k$ are such that we remove $\piros{9}$ and add $\kek{8}$.  The four cases can be illustrated as follows:

\begin{align*}
A1) \quad 
(\kek{5}, \piros{35})\piros{|_1}(\kek{47},\piros{247})\kek{|_1}\piros{|_0}(\kek{23},\piros{68})(\kek{6*},\piros{9*})
\quad&\rightarrow\quad
(\kek{58}, \piros{35})\piros{|_1}(\kek{47},\piros{247})\kek{|_1}\piros{|_0}(\kek{23},\piros{68})(\kek{6*},\piros{*})\\
A2) \quad 
(\kek{5}, \piros{35})\piros{|_1}(\kek{47},\piros{2})\kek{|_1}\piros{|_0}(\kek{23},\piros{68})(\kek{6*},\piros{479*})
\quad&\rightarrow\quad
(\kek{8},\piros{47})(\kek{5}, \piros{3,5})\piros{|_1}(\kek{47},\piros{2})\kek{|_1}\piros{|_0}(\kek{23},\piros{68})(\kek{6*},\piros{*})\\
B1) \quad 
(\kek{5}, \piros{358})\piros{|_1}(\kek{47},\piros{24})\kek{|_1}\piros{|_0}(\kek{23},\piros{9})(\kek{6*},\piros{67*})
\quad&\rightarrow\quad
\kek{|_1}\piros{|_0}(\kek{23},\piros{67})(\kek{5}, \piros{358})\piros{|_1}(\kek{47},\piros{24})(\kek{68*},\piros{*})\\
B2) \quad 
(\kek{5}, \piros{38})\piros{|_1}(\kek{47},\piros{4})\kek{|_1}\piros{|_0}(\kek{23},\piros{259})(\kek{6*},\piros{67*})
\quad&\rightarrow\quad
\kek{|_1}\piros{|_0}(\kek{23},\piros{67})(\kek{5}, \piros{38})\piros{|_1}(\kek{47},\piros{4})(\kek{8},\piros{25})(\kek{6*},\piros{*})
\end{align*}

\end{example}

\begin{definition} \label{defi_barm}
  For $n\geq0$ and $k\geq1$, let $\mathcal{C}_n^k(m,\piros{*},\kek{|m'})$ denote the subset of $\mathcal{C}_n^k(m,\piros{*}) $, containing elements $\alpha$ such that the maximal blue element $\kek{m'} = \kek{m+k}$ is in an ordinary Callan pair as a singleton, and there is at least one bar preceding this pair.  Following the previous convention, let $C_n^k(m,\piros{*},\kek{|m'})$ denote the cardinality of $\mathcal{C}_n^k(m,\piros{*},\kek{|m'})$.
\end{definition}

Note that when $k=0$ and $n>0$, we have $\mathcal{C}_n^0(m,\piros{*}) = \varnothing $.  Indeed, the only $0\times n$ Callan sequence is $(\kek{*},\piros{123\dots n*})$, which has a nonempty extra red block.  Accordingly, we also take the convention $\mathcal{C}_n^0(m,\piros{*},\kek{|m'}) = \varnothing$.

Note also that $\mathcal{C}_0^k(m,\piros{*},\kek{|m'}) = \varnothing$.  Indeed, the only $k\times 0$-Callan pair is $(\kek{123\dots k*},\piros{*})$, so it is not possible to have $\kek{m'}$ as a singleton in an ordinary pair.

\begin{lemma}
The map $\phi$ defined as above is a bijection between the sets $\mathcal{C}_n^k(m,\piros{R*})$ and $\mathcal{C}_{n-1}^{k+1}(m,\piros{*})\setminus \mathcal{C}_{n-1}^{k+1}(m,\piros{*},\kek{|m'})$. 
\end{lemma}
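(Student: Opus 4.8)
The plan is to prove the lemma by exhibiting an explicit inverse and verifying that $\phi$ and this inverse are well-defined and mutually inverse. I would begin with the well-definedness of $\phi$ into the claimed codomain. In each of the four cases A1)--B2) the construction turns the extra red block into $\{\piros{*}\}$ (in A1) by deleting $\piros{m+n}$ from $\{\piros{m+n},\piros{*}\}$), so the image always sits in $\mathcal{C}_{n-1}^{k+1}(m,\piros{*})$ on that count. That the image is a genuine $m$-barred Callan sequence reduces, via Note~\ref{remarks_groups}, to the two bar-ordering conditions: since the last bar of every group is red and the map only ever relocates \emph{entire} groups (a run of bars together with the Callan pair it precedes), the internal bar conditions of each group are untouched, while every new junction is of the form (Callan pair)(next group), which is always admissible because a Callan pair may be followed by anything; in particular the multiset of labelled bars is preserved. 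Finally all blocks stay non-empty: in A1) and B1) this uses the standing hypothesis $\alpha\in\mathcal{C}_n^k(m,\piros{R*})$, which forces $\piros{R^*}\setminus\{\piros{*}\}\neq\varnothing$, and in A2) and B2) the block $\piros{R'}$ is non-empty by the defining condition of the case.

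The decisive point is that the image avoids $\mathcal{C}_{n-1}^{k+1}(m,\piros{*},\kek{|m'})$, which I would check by tracking the new element $\kek{m'}$. In A1) it is adjoined to an already non-empty blue block (or to the extra blue block), and in B1) it is adjoined to the $(i+1)$st blue block, so in both cases $\kek{m'}$ is \emph{not} a singleton of an ordinary pair. In A2) it forms the singleton pair $(\{\kek{m'}\},\piros{R'})$ placed at the very front with no bar before it, and in B2) it forms the singleton pair $(\{\kek{m'}\},\piros{R'})$ that stays in place but is now preceded by the Callan pair of the previous group rather than by a bar. Hence in no case is $\kek{m'}$ a singleton of an ordinary pair preceded by a bar, so the image indeed lands in the complement.

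Next I would build the inverse on a sequence $\beta\in\mathcal{C}_{n-1}^{k+1}(m,\piros{*})\setminus\mathcal{C}_{n-1}^{k+1}(m,\piros{*},\kek{|m'})$ by reading off the producing case from the position of $\kek{m'}$ alone. If $\kek{m'}$ is not a singleton of an ordinary pair, it lies either in the first Callan pair (undo A1: delete $\kek{m'}$ and adjoin $\piros{m+n}$ to the extra block) or not (undo B1: move the leading group back in front of the group containing $\kek{m'}$, delete $\kek{m'}$, and restore the red blocks). If $\kek{m'}$ is a singleton of an ordinary pair, then since $\beta$ lies in the complement no bar precedes that pair, so the pair is either the first element of $\beta$ (undo A2) or is immediately preceded by a Callan pair (undo B2: move the leading group back in front of it, merge the two pairs, and restore the red blocks). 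These four alternatives are exhaustive and mutually exclusive, and each returns a sequence with non-empty extra red block, i.e. an element of $\mathcal{C}_n^k(m,\piros{R*})$.

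It then remains to check that the two maps compose to the identity in both directions, which amounts to confirming that forward case A1)/A2)/B1)/B2) is undone by precisely the matching inverse alternative; this is routine but careful bookkeeping of the blocks $\piros{R'}$, $\piros{R^*}$ and of the relocated groups. I expect the main obstacle to be the surjectivity half, namely that the inverse is defined on \emph{all} of the complement: one must argue that the four image-side alternatives genuinely partition it, and this is exactly where the removal of $\mathcal{C}_{n-1}^{k+1}(m,\piros{*},\kek{|m'})$ is indispensable, since it is precisely the exclusion of a singleton $\kek{m'}$ preceded by a bar---a configuration that, by the previous paragraph, no forward case can produce.
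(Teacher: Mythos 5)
Your proposal is correct and follows essentially the same route as the paper's proof: verify that the image is a valid $m$-barred Callan sequence with empty extra red block, track the new element $\kek{m'}$ to see that the image avoids $\mathcal{C}_{n-1}^{k+1}(m,\piros{*},\kek{|m'})$, and recover the case (hence the inverse) from the position of $\kek{m'}$. You are somewhat more explicit than the paper about surjectivity and the exhaustiveness of the four image-side alternatives, but the underlying argument is the same.
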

\begin{proof}

Let us first check that $\phi(\alpha) \in \mathcal{C}_{n-1}^{k+1}(m,\piros{*})$.  Note that each group of consecutive bars is unchanged (though these groups are possibly permuted in case B1 and B2), so we only need to check that the pairs form a valid Callan sequence.  This is straightforward.

Then, let us check that $\phi(\alpha) \notin \mathcal{C}_{n-1}^{k+1}(m,\piros{*},\kek{|m'})$.  In cases A1 and B1, the blue element $\kek{m'}$ is added to a blue block (which is nonempty by definition) so  $\kek{m'}$ is not a single element in its block.  In cases A2 and B2, $\kek{m'}$ is in a singleton block, so we need to check that there is no bar preceding the pair containing $\kek{m'}$.  In case A2, this is because this pair is at the beginning of the sequence.  In case B2, the new pair $(\{\kek{m'}\},\piros{R'})$ is created with another pair to its left, and since the pair to its left is moved together with the associated group of bars, there is again another pair to the left of $(\{\kek{m'}\},\piros{R'})$ in $\phi(\alpha)$.

In the previous paragraph, we observed the following properties in $\phi(\alpha)$:
\begin{itemize}
 \item $\kek{m'}$ is in the first pair iff the sequence was obtained from the cases A1 or A2. 
 \item $\kek{m'}$ is alone in its block iff the sequence was obtained from the cases A2 or B2.
\end{itemize}
So, the four cases can be 
distinguished based on which of the above properties they satisfy.  In each case, knowing the position of $\kek{m'}$ it is straightforward to describe the inverse procedure, so that $\phi$ is injective.  
\end{proof}

The bijection $\phi$ can be used to obtain pairwise cancellations in the alternating sums, and we get the following:

\begin{lemma}
 We have:
 \[
   \sum_{j=0}^n (-1)^j C_{n-j}^j(m)
   =
   \sum_{j=0}^{n} (-1)^{j} C_{n-j}^j(m,\piros{*},|\kek{m'}).
 \]
\end{lemma}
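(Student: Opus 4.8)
The plan is to expand each summand by separating the sequences whose extra red block is empty from the others, use the preceding lemma to rewrite the latter, and then watch almost everything cancel after a single reindexing. Throughout I assume $n \geq 1$, as in the standing hypothesis of this section.

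First I would use the disjoint decomposition $\mathcal{C}_{n-j}^{j}(m) = \mathcal{C}_{n-j}^{j}(m,\piros{*}) \sqcup \mathcal{C}_{n-j}^{j}(m,\piros{R*})$ to write $C_{n-j}^{j}(m) = C_{n-j}^{j}(m,\piros{*}) + C_{n-j}^{j}(m,\piros{R*})$. For the second summand, applying the preceding lemma with red count $n-j$ and blue count $j$ (valid whenever $n-j \geq 1$) gives
\[
  C_{n-j}^{j}(m,\piros{R*}) = C_{n-j-1}^{j+1}(m,\piros{*}) - C_{n-j-1}^{j+1}(m,\piros{*},\kek{|m'}).
\]
At the extreme $j = n$ this does not apply, but there $C_{0}^{n}(m,\piros{R*}) = 0$, since the extra red block of a $0 \times n$ sequence can only be $\{\piros{*}\}$; so that term contributes nothing.

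Next I would substitute these expressions into $\sum_{j=0}^{n} (-1)^j C_{n-j}^{j}(m)$ and reindex the $\phi$-image terms by $i = j+1$. Because $(-1)^{j} = -(-1)^{i}$, the shift flips the sign and sends $C_{n-j-1}^{j+1}(\cdots)$ to $C_{n-i}^{i}(\cdots)$, yielding
\[
  \sum_{j=0}^{n} (-1)^j C_{n-j}^{j}(m,\piros{R*}) = -\sum_{i=1}^{n} (-1)^{i} C_{n-i}^{i}(m,\piros{*}) + \sum_{i=1}^{n} (-1)^{i} C_{n-i}^{i}(m,\piros{*},\kek{|m'}).
\]
The $(m,\piros{*})$-contributions now occur twice with opposite signs: once directly, over $j = 0,\dots,n$, and once, negated, over $i = 1,\dots,n$. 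Everything cancels except the single boundary term $(-1)^0 C_n^0(m,\piros{*})$, which is $0$ because $\mathcal{C}_n^0(m,\piros{*}) = \varnothing$ for $n \geq 1$, as noted just before the preceding lemma. What remains is precisely $\sum_{i=1}^{n} (-1)^{i} C_{n-i}^{i}(m,\piros{*},\kek{|m'})$.

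Finally I would match this against the claimed right-hand side, which runs over $j = 0,\dots,n$. The only discrepancy is the $j = 0$ term $C_n^0(m,\piros{*},\kek{|m'})$, which vanishes by the convention $\mathcal{C}_n^0(m,\piros{*},\kek{|m'}) = \varnothing$, so it may be adjoined freely and the two sums agree. The steps requiring the most care — and the only genuine obstacles — are the sign bookkeeping in the reindex $i = j+1$ and the verification that the three boundary quantities $C_0^n(m,\piros{R*})$, $C_n^0(m,\piros{*})$, and $C_n^0(m,\piros{*},\kek{|m'})$ all vanish (the first because the extra red block is forced, the latter two by the emptiness conventions recorded in the excerpt); once these are settled, the identity reduces to a purely formal cancellation.
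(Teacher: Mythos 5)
Your proof is correct and follows essentially the same route as the paper: both arguments rest on the bijection $\phi$ between $\mathcal{C}_{n-j}^{j}(m,\piros{R*})$ and $\mathcal{C}_{n-j-1}^{j+1}(m,\piros{*})\setminus\mathcal{C}_{n-j-1}^{j+1}(m,\piros{*},\kek{|m'})$, an index shift $j\mapsto j+1$ that flips the sign, and the vanishing of the same boundary terms $C_0^n(m,\piros{R*})$ and $C_n^0(m,\piros{*})$. The only difference is presentational: the paper partitions each set into three pieces and cancels two of the resulting sums directly, while you expand the cardinality of the set difference and let the $(m,\piros{*})$ terms telescope, which amounts to the same computation.
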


\begin{proof}
The set $\mathcal{C}_{n-j}^j(m)$ can be partitioned as
\begin{align}\label{split}
 \mathcal{C}_{n-j}^j(m) 
 = 
   \mathcal{C}_{n-j}^j(m,\piros{R*}) 
 \biguplus 
   \Big(\mathcal{C}_{n-j}^j(m,\piros{*}) \setminus \mathcal{C}_{n-j}^j (m,\piros{*},|\kek{m'})\Big) 
 \biguplus 
   \mathcal{C}_{n-j}^j (m,\piros{*},|\kek{m'}),
\end{align}
so that 
\begin{align*}
 \sum_{j=0}^n (-1)^j C_{n-j}^j(m)
 =
 \sum_{j=0}^n (-1)^j|\mathcal{C}_{n-j}^j(m,\piros{R*})|
  +
 \sum_{j=0}^n (-1)^j &\Big|\Big(\mathcal{C}_{n-j}^j(m,\piros{*}) \setminus \mathcal{C}_{n-j}^j (m,\piros{*},|\kek{m'})\Big)\Big| \\
  &+
 \sum_{j=0}^n (-1)^j C_{n-j}^j (m,\piros{*},|\kek{m'}).
\end{align*}
The bijection $\phi$ readily shows that the first and second sum cancel each other out, upon checking the boundary terms.  

The first boundary term is $j=n$ in the first sum.  The set $\mathcal{C}_0^n(m,\piros{R*})$ is empty (as there are no red element besides $\piros{*}$, the extra red block cannot be nonempty), so the corresponding term is $0$.  The second boundary term is $j=0$ in the second sum.  As noted after Definition~\ref{defi_barm}, $\mathcal{C}_n^{0}(m,\piros{*})$ is empty,
$\mathcal{C}_n^{0}(m,\piros{*})\setminus \mathcal{C}_n^{0}(m,\piros{*},|\kek{m'})$ is empty as well, and the corresponding term is $0$.
\end{proof}

Using the previous lemma to prove Proposition~\ref{prop_rec} it remains only to show:
\begin{align} \label{id-var1}
  \sum_{j=0}^{n} (-1)^{j} C_{n-j}^j(m,\piros{*},|\kek{m'})
  =
  - \sum_{j=0}^{n-2} (-1)^j C_{n-2-j}^j(m+1) .
\end{align} 
This will be done in the next section.

\section{\texorpdfstring{The bijection $\psi$}{The bijection ψ}}

To prove \eqref{id-var1}, it is convenient to use another set. 

\begin{definition}
We define $\mathcal{C}_{n}^k(m,\piros{*},|\kek{m+1})$ like $\mathcal{C}_{n}^k(m,\piros{*},|\kek{m'})$ in Definition~\ref{defi_barm}, but with $m+1$ instead of $m'=m+k$. 
As before, denote $C_{n}^k(m,\piros{*},|\kek{m+1}) = |\mathcal{C}_{n}^k(m,\piros{*},|\kek{m+1})|$.

Explicitely, $\mathcal{C}_n^k(m,\piros{*},\kek{|m'})$ is the subset of $\mathcal{C}_n^k(m,\piros{*}) $ containing elements $\alpha$ such that the minimal blue element $\kek{m+1}$ is in an ordinary Callan pair as a singleton, and there is at least one bar preceding this pair. 
\end{definition}

There is a bijection from $\mathcal{C}_{n}^k(m,\piros{*},|\kek{m'})$ to $\mathcal{C}_{n}^k(m,\piros{*},|\kek{m+1})$, via a relabelling of blue elements that exchanges $\kek{m+k}$ and $\kek{m+1}$.  Using this, \eqref{id-var1} is clearly equivalent to:
\begin{align} \label{id-var2} 
  \sum_{j=0}^{n} (-1)^{j} C_{n-j}^j(m,\piros{*},|\kek{m+1})
  =
  - \sum_{j=0}^{n-2} (-1)^j C_{n-2-j}^j(m+1) .
\end{align} 

In order to show \eqref{id-var2}, we give a bijection $\psi$ between the sets $\mathcal{C}_n^{k}(m,\piros{*},|\kek{m+1})$ and $\mathcal{C}_{n-1}^{k-1}(m+1)$ for $n,k\geq1$.  Note that $\mathcal{C}_0^{k}(m,\piros{*},|\kek{m+1})=\mathcal{C}_n^{0}(m,\piros{*},|\kek{m+1})=\varnothing$, following the remarks after Definition~\ref{defi_barm}.

We do this in two steps, i.e., as the composition of two operations: $\psi=\psi_r \circ \psi_b$.  The first map $\psi_b$ takes the blue element $\kek{m+1}$ out of a Callan pair and changes it into a labeled blue bar $\kek{|_{m+1}}$. The second map $\psi_r$ does the same with $\piros{m+1}$ and $\piros{|_{m+1}}$.

\begin{definition}
  For $\alpha \in \mathcal{C}_{n}^k(m,\piros{*},|\kek{m+1})$, note that $\alpha$ contains an ordinary pair $(\kek{m+1},\piros{R})$.  Let $w_1$ (respectively, $w_2$) denote the maximal subsequence of consecutive bars that are directly to the left (respectively, to the right) of $(\kek{m+1},\piros{R})$.  
  Note that $w_1$ is nonempty by definition, but $w_2$ is possibly empty.  We define $\psi_b(\alpha)$ as follows.
  \begin{itemize}
   \item Replace $w_1 \; (\kek{m+1},\piros{R}) \; w_2$ with $w_2 \; \kek{|_{m+1}} \; w_1$ in the sequence.
   \item Replace the extra red block $\{\piros{*}\}$ with $\piros{R}\cup\{\piros{*}\}$.
  \end{itemize}
\end{definition}

\begin{example}
\begin{align*}
(\kek{4},\piros{9})(\kek{58},\piros{25})\piros{|_1}(\kek{2},\piros{347})\kek{|_1}\piros{|_0}(\kek{37},\piros{68})(\kek{6*},\piros{*})\rightarrow_{\psi_b} 
(\kek{4},\piros{9})(\kek{58},\piros{35})\kek{|_1}\piros{|_0}\kek{|_2}\piros{|_1}(\kek{37},\piros{68})(\kek{6*},\piros{347*})
\end{align*}
\end{example}

Note that an element in the image of $\psi_b$ has a nonempty extra red block.

\begin{definition}
Let $\alpha \in \im (\psi_b)$.  We define $\psi_r(\alpha)$ by the following procedure,
distinguishing two cases:
\begin{itemize}  
\item If $\piros{m+1} \in \piros{R^*}$, replace the extra pair $(\kek{B^*}, \piros{R^*})$ with two consecutive elements $\piros{|_{m+1}}$ and $(\kek{B^*} , \piros{R^*}\setminus\{\piros{m+1}\})$.
\item Otherwise, $\piros{m+1}$ is in an ordinary pair $(\kek{B_i}, \piros{R_i})$.  Replace the pair $(\kek{B_i}, \piros{R_i})$ with two consecutive elements $\piros{|_{m+1}}$ and $(\kek{B_i},\piros{R^*}\setminus\{\piros{*}\})$, then replace the extra pair $(\kek{B^*}, \piros{R^*})$ with $(\kek{B^*}, \piros{R'}\cup\{\piros{*}\})$ where $\piros{R'}=\piros{R_i}\setminus\{\piros{m+1}\}$.
\end{itemize}
\end{definition}

\begin{example}
\begin{align*}
(\kek{583},\piros{46})(\kek{29},\piros{35})\kek{|_1}\piros{|_0}(\kek{47},\piros{2})(\kek{3*},\piros{17*})&\rightarrow_{\psi_r}
(\kek{583},\piros{46})(\kek{29},\piros{35})\kek{|_1}\piros{|_0}(\kek{47},\piros{2})\piros{|_1}(\kek{3*},\piros{7*})\\
 (\kek{583},\piros{1})(\kek{29},\piros{35})\kek{|_1}\piros{|_0}(\kek{47},\piros{46})(\kek{3*},\piros{27*})&\rightarrow_{\psi_r}
 \piros{|_1}(\kek{583},\piros{27})(\kek{29},\piros{35})\kek{|_1}\piros{|_0}(\kek{47},\piros{46})(\kek{3*},\piros{*})
\end{align*}
\end{example}

\begin{lemma}
For $n,k\geq1$ and $m\geq 0$, the map $\psi = \psi_r \circ \psi_b$ is a bijection between the sets $\mathcal{C}_n^{k}(m,\piros{*},|\kek{m+1})$ and $\mathcal{C}_{n-1}^{k-1}(m+1)$.
\end{lemma}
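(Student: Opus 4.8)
The plan is to prove that $\psi$ is a bijection by factoring it through an intermediate set and exhibiting a two-sided inverse for each of $\psi_b$ and $\psi_r$. Let $\mathcal{S}=\im(\psi_b)$ be the set of sequences built exactly like the $m$-barred Callan sequences except that there is one additional blue bar $\kek{|_{m+1}}$ (so the blue bars are labelled $1,\dots,m+1$ while the red bars are still labelled $0,\dots,m$), the Callan pairs use the base sets $\{\kek{m+2},\dots,\kek{m+k},\kek{*}\}$ and $\{\piros{m+1},\dots,\piros{m+n},\piros{*}\}$, the same two ordering rules hold, and the extra red block is nonempty. First I would show that $\psi_b$ is a bijection from $\mathcal{C}_n^k(m,\piros{*},|\kek{m+1})$ onto $\mathcal{S}$, and then that $\psi_r$ is a bijection from $\mathcal{S}$ onto $\mathcal{C}_{n-1}^{k-1}(m+1)$; composing these gives the claim.

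For $\psi_b$, I would first check that the output lies in $\mathcal{S}$. The only ordering rule to verify concerns the newly created blue bar $\kek{|_{m+1}}$ and the two runs of bars $w_1,w_2$ around it. Writing the local picture as $P_{\mathrm{prev}}\,w_2\,\kek{|_{m+1}}\,w_1\,P_{\mathrm{next}}$, where $P_{\mathrm{prev}},P_{\mathrm{next}}$ are Callan pairs, one checks that $w_2$ ends in a red bar (so the transition into $\kek{|_{m+1}}$ is a jump up to the maximal label $m+1$, which is legal) and that $w_1$ begins with a bar of label $\le m<m+1$ (so the transition out of $\kek{|_{m+1}}$ is legal); since a blue bar must always be followed by a bar, the requirement ``at least one bar preceding the pair'' in the definition of $\mathcal{C}_n^k(m,\piros{*},|\kek{m+1})$ is exactly what forces $w_1\neq\varnothing$. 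The inverse locates the unique blue bar $\kek{|_{m+1}}$, splits the maximal run of bars containing it into the part $w_2$ before it and the part $w_1$ after it, and replaces $w_2\,\kek{|_{m+1}}\,w_1$ by $w_1\,(\kek{m+1},\piros{R})\,w_2$, where $\piros{R}$ is the content of the extra red block minus $\piros{*}$, resetting that block to $\{\piros{*}\}$. That $w_1$ ends in a red bar and is nonempty shows $(\kek{m+1},\piros{R})$ is a legal ordinary pair with a bar to its left, so the output lies in $\mathcal{C}_n^k(m,\piros{*},|\kek{m+1})$, and the two constructions are visibly mutually inverse.

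For $\psi_r$, the nonemptiness of the extra red block in $\mathcal{S}$ is used to guarantee that the pair $(\kek{B_i},\piros{R^*}\setminus\{\piros{*}\})$ created in the second case is a legal Callan pair. In both cases the new red bar $\piros{|_{m+1}}$ is inserted directly in front of a Callan pair and carries the maximal red label $m+1$, so it is automatically followed by a Callan pair and preceded, if by a bar at all, by one of smaller label; hence the output is a genuine $(m+1)$-barred Callan sequence, and since $\piros{m+1}$ has been removed it lies in $\mathcal{C}_{n-1}^{k-1}(m+1)$. The inverse finds the unique maximal red bar $\piros{|_{m+1}}$ and inspects the Callan pair $Q$ immediately following it: if $Q$ is the extra pair we undo the first case (delete $\piros{|_{m+1}}$ and return $\piros{m+1}$ to the extra block), and if $Q$ is ordinary we undo the second case (delete $\piros{|_{m+1}}$, swap the red contents of $Q$ and the extra block back, and reinsert $\piros{m+1}$ into $Q$). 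That this lands back in $\mathcal{S}$ reduces to noting that the reconstructed extra block is nonempty in both cases, and again the two constructions are mutually inverse.

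The main obstacle is the bookkeeping of the ordering rules and of the red elements shuffled between blocks, rather than any conceptual difficulty: one must verify at every step that the two extremal labels behave correctly, namely that $\kek{|_{m+1}}$ is always immediately followed by a strictly smaller bar and that $\piros{|_{m+1}}$ is always immediately followed by a Callan pair, and that the case distinction in $\psi_r$ and in its inverse is genuinely determined by the data, i.e.\ by whether $\piros{m+1}$ sits in the extra block or in an ordinary block, equivalently by whether the pair following $\piros{|_{m+1}}$ is the extra pair. Once these checks are in place, the composition $\psi=\psi_r\circ\psi_b$ is a bijection from $\mathcal{C}_n^{k}(m,\piros{*},|\kek{m+1})$ onto $\mathcal{C}_{n-1}^{k-1}(m+1)$, as required.
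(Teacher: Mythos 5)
Your proposal is correct and follows essentially the same route as the paper: verify that the bar-ordering rules hold at the only affected locations (the run $w_2\,\kek{|_{m+1}}\,w_1$ created by $\psi_b$, and the insertion point of $\piros{|_{m+1}}$ in $\psi_r$, using that the bar preceding a Callan pair is always red with label at most $m$), and invert by locating the extremal bars and reading off which case of $\psi_r$ occurred from whether the pair after $\piros{|_{m+1}}$ is the extra pair. You are slightly more explicit than the paper in describing the intermediate set $\im(\psi_b)$ and the inverse of $\psi_b$, but the argument is the same.
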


\begin{proof}
 First, we can check that $\psi(\alpha)$ with labelled bars removed is a $(k-1)\times(n-1)$ Callan sequence where labels are shifted up by $m+1$.  As noted above, $\psi_b(\alpha)$ has a nonempty extra red block.  In the second case of the definition of $\psi_r$, we thus see that $\piros{R^*}\setminus\{\piros{*}\}\neq\varnothing$ so that $(\kek{B_i},\piros{R^*}\setminus\{\piros{*}\})$ is a valid Callan pair.  
 
 Secondly, we check that the labelled bars in $\psi(\alpha)$ sastisfy the conditions so that $\psi(\alpha)\in \mathcal{C}_{n-1}^{k-1}(m+1)$.  We begin with labelled bars in $\psi_b(\alpha)$.  There are two locations to check:
 \begin{itemize}
  \item The element after $\kek{|_{m+1}}$ should be a labelled bar with label strictly smaller than $m+1$.  This element is the first one of $w_1$, as $w_1$ is nonempty.  By definition, it satisfies the required properties.
  \item If $w_2$ is empty, either $\kek{|_{m+1}}$ begins the sequence or it is preceded by a Callan pair.  Otherwise, the last element of $w_2$ is a bar $\piros{|_i}$ with $i\leq m$, as it is followed by a Callan pair in $\alpha$.  In both cases, this is a valid configuration.
\end{itemize}
Then, $\psi(\alpha)$ is obtained by adding $\piros{|_{m+1}}$ to the left of a Callan pair.  Thus, either it is a new group of bars in itself, or it is added at the end of an existing group of bars.  In the second case, as the last element of this group is a red bar with label $i$ at most $m$, the configuration $\piros{|_i |_{m+1}}$ is valid.

We thus have $\psi(\alpha) \in \mathcal{C}_{n-1}^{k-1}(m+1)$.  Now it remains to describe the inverse bijection $\psi^{-1}$ and check that it is indeed the left and right inverse of $\psi$. 
In the definition of $\psi_r$, the cases can be distinguished: the first one (respectively, second one) results in $\piros{|_{m+1}}$ being to the left of the extra pair (respectively, of an ordinary pair).  Using that, the inverse bijection is straightforward to describe explicitly.
\end{proof}

\begin{remark}
	Our bijections provided a combinatorial proof for the fact that the alternating diagonal sum of $C$-poly-Bernoulli numbers is given by the Genocchi number (Theorem~\ref{theo_identity}). We used a special technique: instead of defining a direct involution on the set that is enumerated by the $C$-poly-Bernoulli numbers, we introduced a kind of interpolating sets, the $m$-barred Callan sequences. Starting with the set of Callan sequences, enumerated by the $C$-poly-Bernoulli number, we applied the involution $\phi$ that reduced the number of elements to obtain a new set, on which we again applied the involution, until we reached the set that is known to be counted by the Genocchi number. This tricky solution shows somewhat the difficulty of the problem. It would be also interesting to find a direct bijection on any of the corresponding combinatorial objects that proves Theorem~\ref{theo_identity} without any iteration steps needed. Moreover, it is still an open problem to find a combinatorial proof for the more general identity related with symmetrized poly-Bernoulli numbers and Gandhi polynomials (see~\cite{M20}).
\end{remark}

\section*{Acknowledgements}
The authors would like to thank the anonymous referees for helpful comments and suggestions.

\end{document}